\newcommand{\semi}{\rtimes}
\newcommand{\iso}{\cong}
\newcommand{\normal}{\vartriangleleft}
\renewcommand{\phi}{\varphi}
\providecommand{\abs}[1]{\ensuremath{\left\lvert#1\right\rvert}}
\DeclareMathOperator{\inv}{inv}
\DeclareMathOperator{\res}{res}
\DeclareMathOperator{\Syl}{Syl}
\def\newblock{\ }
\newtheorem{theorem}{Theorem} 
\newtheorem{lemma}{Lemma}   
\newtheorem{corollary}{Corollary}
\newtheorem{proposition}{Proposition}
\begin{document}

\title{Fixed point conditions for non-coprime actions}

\author{\name{Michael C. \surname{Burkhart}}}
\address{University of Cambridge, Cambridge, United Kingdom
	\email{mcb93@cam.ac.uk}}

\begin{abstract}
	In the setting of finite groups, suppose $J$ acts on $N$ via automorphisms
	so that the induced semidirect product $N\rtimes J$ acts on some non-empty
	set $\Omega$, with $N$ acting transitively. Glauberman proved that if the
	orders of $J$ and $N$ are coprime, then $J$ fixes a point in $\Omega$. We
	consider the non-coprime case and show that if $N$ is abelian and a Sylow
	$p$-subgroup of $J$ fixes a point in $\Omega$ for each prime $p$, then $J$
	fixes a point in $\Omega$. We also show that if $N$ is nilpotent, $N\semi
	J$ is supersoluble, and a Sylow $p$-subgroup of $J$ fixes a point in
	$\Omega$ for each prime $p$, then $J$ fixes a point in $\Omega$.
\end{abstract}


\keywords{non-coprime actions, conjugacy of complements, supersoluble groups}

\classification[\textup{2020} Mathematics subject classification]{05E18; 20E22;
	20E45; 20F16; 20J06; 55N45}

\maketitle
\section{Introduction}
\label{s:intro}
Suppose a finite group $J$ acts via automorphisms on a finite group $N$ and the
induced semi-direct product $G= N \semi J$ acts on some non-empty set $\Omega$
where the action of $N$ is transitive. Glauberman showed that if each
supplement $H$ of $N$ in $G$ splits over $N \cap H$ and each complement of $N$
in $G$ is conjugate to $J$, then there exists a $J$-invariant element
$\omega\in\Omega$. Consequently, if the orders of $J$ and $N$ are coprime so
that the Schur--Zassenhaus theorem applies, a fixed point always
exists~\cite[Thm. 4]{Gla64}. In this note, we consider the non-coprime case and
establish some conditions for the existence of a fixed point.

Given an action as described above, consider the stabiliser $G_\alpha\leq G$
fixing an arbitrary point $\alpha\in\Omega$. As $N$ is transitive, $G_\alpha$
supplements $N$ in $G$. In this context, $J$ fixes an element of $\Omega$ if
and only if the following two conditions are met. Firstly, we must ensure
$G_\alpha$ splits over $N\cap G_\alpha$ so that there exists some complement
$J'$. As $G/N \iso G_\alpha/(N\cap G_\alpha)$, it will follow that $J'$ also
complements $N$ in $G$. Secondly, we require that $J' = g^{-1}Jg$ for some
$g\in G$ so that $J$ fixes $g\cdot \alpha$. For the latter requirement, we
concern ourselves with conditions for two specific complements in a semidirect
product to be conjugate.

To this end, we say two subgroups $H$ and $H'$ are \emph{locally conjugate} in
a group $G$ if for each prime $p$, a Sylow $p$-subgroup of $H$ is conjugate to
a Sylow $p$-subgroup of $H'$. Losey and Stonehewer showed that if $H$ and $H'$
are locally conjugate supplements of some normal nilpotent subgroup $N$ in a
soluble group $G$, then $H$ and $H'$ are conjugate if either $G/N$ is nilpotent
or $N$ is abelian~\cite{Los79}. Evans and Shin further showed that if $N$ is
abelian, then $G$ need not be soluble~\cite{Eva88}.

\newpage

We first restrict $N$ to be abelian and use a decomposition result from group
cohomology to provide an alternate proof of:
\begin{lemma}[Evans and Shin]
	\label{lem:ab}
	In a finite group, two complements of a normal abelian subgroup are
	conjugate if and only if they are locally conjugate.
\end{lemma}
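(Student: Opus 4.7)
My plan is to deduce the lemma from the dictionary between complements of a normal abelian subgroup and first cohomology, combined with the Sylow-by-Sylow decomposition in group cohomology. The forward direction is immediate, since conjugation permutes Sylow $p$-subgroups. For the converse, fix the complement $H$, write $G = N \semi H$, and parametrise any other complement $H'$ as $H' = \{c(h)h : h \in H\}$ for some $1$-cocycle $c \colon H \to N$. Because $N$ is abelian, $H$ and $H'$ are $G$-conjugate if and only if $[c] = 0$ in $H^1(H, N)$, so the task reduces to showing that $[c]$ vanishes.

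For this I would invoke two standard cohomological facts. First, the primary decomposition $H^1(H, N) = \bigoplus_p H^1(H, N)_{(p)}$ holds since $H^1(H, N)$ is annihilated by $\abs{H}$. Second, for each prime $p$ and Sylow $p$-subgroup $P \leq H$, the restriction $\res^H_P \colon H^1(H, N) \to H^1(P, N)$ is injective on the $p$-primary component, because $\operatorname{cor}^H_P \circ \res^H_P$ equals multiplication by the $p$-unit $[H:P]$ and is therefore an isomorphism on the $p$-part. Combining these two facts, it suffices to show $\res^H_P[c] = 0$ for each prime $p$ and each Sylow $p$-subgroup $P$ of $H$.

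The remaining step, which I expect to require the most care, is translating the local conjugacy hypothesis into these vanishing statements. Writing $P_c = \{c(p) p : p \in P\}$, a Sylow $p$-subgroup of $H'$, local conjugacy together with Sylow's theorem applied inside $H$ and inside $H'$ forces $P$ and $P_c$ to be $G$-conjugate. Decomposing the conjugator as $nh$ with $n \in N$ and $h \in H$ yields only $N$-conjugacy of $P_c$ with $hPh^{-1}$ rather than with $P$ itself, and this intermediate $N$-conjugacy translates precisely into $c$ restricted to $hPh^{-1}$ being a coboundary. A short cocycle computation, exploiting the classical triviality of inner automorphisms of $H$ on group cohomology, then upgrades this to $c|_P$ being a coboundary, i.e., $\res^H_P[c] = 0$. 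Assembling these pieces gives $[c] = 0$ and hence $H'$ is $G$-conjugate to $H$.
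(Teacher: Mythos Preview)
Your proof is correct and follows essentially the same route as the paper's: both parametrise complements by $1$-cocycles, invoke the $p$-primary decomposition of $H^1(H,N)$ together with injectivity of restriction to a Sylow $p$-subgroup on the $p$-part, and reduce the problem to showing $\res^H_P[c]=0$ for each prime $p$. The paper handles your final step slightly more directly by writing the conjugator as $jn$ with $j\in H$ first and absorbing $j$ into the choice of Sylow subgroup of $H$, so that one obtains $N$-conjugacy of $P$ itself (rather than of $hPh^{-1}$) with a Sylow subgroup of $H'$, making your inner-automorphism manoeuvre unnecessary.
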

We use this, along with Gasch\"utz’s result that a finite group $G$ splits over
an abelian subgroup $N$ if and only if for each prime $p$, a Sylow $p$-subgroup
$S$ of $G$ splits over $N\cap S$, to show:
\begin{theorem} 
	\label{thm:ab} 
	Given a finite group $J$ acting via automorphisms on a finite abelian group
	$N$, suppose the induced semidirect product $N\semi J$ acts on some
	non-empty set $\Omega$ where the action of $N$ is transitive. If for each
	prime $p$, a Sylow $p$-subgroup of $J$ fixes an element of $\Omega$, then
	there exists some $J$-invariant element $\omega\in\Omega$.
\end{theorem}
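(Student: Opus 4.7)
The plan is to verify, for a fixed $\alpha \in \Omega$, the two conditions identified in the introduction: that $G_\alpha$ splits over $N \cap G_\alpha$ to produce a complement $J'$, and that $J' = g^{-1} J g$ for some $g \in G$. For the splitting I will use Gasch\"utz's theorem; for the conjugacy I will use Lemma~\ref{lem:ab}.

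For the splitting, transitivity of $G$ on $\Omega$ gives, for each prime $p$, an element $g_p \in G$ with $g_p\alpha = \alpha_p$, so that $g_p^{-1} S_p g_p \leq G_\alpha$. This is a $p$-subgroup of $G_\alpha$ of order $|J|_p$ meeting $N$ trivially. Embedding it in a Sylow $p$-subgroup $Q$ of $G_\alpha$ and counting orders (using $|G_\alpha/(N\cap G_\alpha)|=|J|$) yields $Q = (g_p^{-1} S_p g_p)(Q \cap N)$ with trivial intersection, so $Q$ splits over $Q \cap (N \cap G_\alpha)$. Gasch\"utz's theorem, applied to the abelian normal subgroup $N \cap G_\alpha$ of $G_\alpha$, then yields the desired complement $J'$, which (as noted in the introduction) is also a complement of $N$ in $G$.

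For the conjugacy, Lemma~\ref{lem:ab} reduces the claim $J \sim_G J'$ to the local-conjugacy condition: for each prime $p$, a Sylow $p$-subgroup of $J'$ should be $G$-conjugate to $S_p$. Since $g_p^{-1} S_p g_p \leq G_\alpha$ is manifestly $G$-conjugate to $S_p$, the idea is to arrange the Gasch\"utz construction so that the Sylow $p$-subgroup of its output $J'$ is $G_\alpha$-conjugate to $g_p^{-1} S_p g_p$. I expect this to be the main technical obstacle: a direct comparison within a Sylow $p$-subgroup of $G_\alpha$ fails, since two complements of an abelian normal subgroup of a $p$-group need not be conjugate inside that $p$-group. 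The argument must instead use the full $G$-conjugacy provided by the cohomological injectivity underlying Lemma~\ref{lem:ab}, which detects local conjugacy classes of complements via restriction to Sylow subgroups.

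Once $J \sim_G J'$ is established, writing $J' = g^{-1} J g$ gives $J \leq g G_\alpha g^{-1} = G_{g\alpha}$, so $g\alpha \in \Omega$ is the desired $J$-invariant element.
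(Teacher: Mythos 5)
Your outline (Gasch\"utz for the splitting, Lemma~\ref{lem:ab} for the conjugacy) is the paper's, and your splitting step is correct: the conjugate $g_p^{-1}S_pg_p\leq G_\alpha$ together with the order count $\abs{G_\alpha}=\abs{N\cap G_\alpha}\,[G:N]$ shows a Sylow $p$-subgroup of $G_\alpha$ splits over its intersection with $N\cap G_\alpha$, so Gasch\"utz yields a complement $J'$ of $N\cap G_\alpha$ in $G_\alpha$, hence of $N$ in $G$. The conjugacy step, however, is a genuine gap, and you have in effect conceded it: what is needed is a complement of $N\cap G_\alpha$ in $G_\alpha$ that is \emph{locally conjugate} to $J$, and nothing in your write-up produces one. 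Your closing appeal to ``the cohomological injectivity underlying Lemma~\ref{lem:ab}'' cannot do the job, because that injectivity only converts local conjugacy into conjugacy; it does not supply local conjugacy of $J$ with the particular $J'$ that Gasch\"utz returns. Nor is that local conjugacy automatic: let $\Omega$ be a one-point set and $G=N\times J$ with $N\iso J\iso C_p$ acting trivially; all hypotheses hold, $G_\alpha=G$, and a complement of $N\cap G_\alpha=N$ in $G_\alpha$ such as the diagonal subgroup is neither conjugate nor locally conjugate to $J$. So the fixed-point hypothesis must be used a second time, to control which complement is compared with $J$; as written, your argument stops exactly where the content of the theorem lies.

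One concrete way to finish along your lines: since $N$ is abelian, $A=N\cap G_\alpha$ is normalised by $N$ and, being the intersection of the normal subgroup $N$ with $G_\alpha$, also by $G_\alpha$, so $A\normal NG_\alpha=G$; as $A$ fixes $\alpha$ and $G$ is transitive, $A$ acts trivially on $\Omega$. Passing to $\bar G=G/A$, the image $\bar N=N/A$ is abelian normal, $\bar J=JA/A$ complements $\bar N$, and the stabiliser $\bar G_\alpha=G_\alpha/A$ is itself a complement of $\bar N$ because $\bar N\cap\bar G_\alpha$ is trivial. For each prime $p$ the hypothesis places a conjugate of a Sylow $p$-subgroup of $\bar J$ inside $\bar G_\alpha$, and $\abs{\bar G_\alpha}=\abs{J}$ shows it is a Sylow $p$-subgroup of $\bar G_\alpha$; hence $\bar J$ and $\bar G_\alpha$ are locally conjugate complements of $\bar N$, Lemma~\ref{lem:ab} makes them conjugate, and pulling back, a conjugate of $J$ lies in a point stabiliser, i.e.\ $J$ fixes a point. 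Equivalently, this shows the complement of $A$ in $G_\alpha$ can be chosen to be a conjugate of $J$, which is precisely the choice your Gasch\"utz step needed but did not provide.
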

This had previously been shown using elementary arguments for the special case
that $J$ is supersoluble~\cite[Cor. 2]{Bur22}. The theorem implies:
\begin{corollary}
	\label{cor:ab}
	Let $G$ be a finite split extension over an abelian subgroup $N$. If for
	each prime $p$ there is a Sylow $p$-subgroup $S$ of $G$ such that any two
	complements of $N\cap S$ in $S$ are conjugate, then any two complements of
	$N$ in $G$ are $G$-conjugate.
\end{corollary}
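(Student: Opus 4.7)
The plan is to invoke Theorem~\ref{thm:ab} for a suitably chosen coset-space action. Fix two complements $K_1$ and $K_2$ of $N$ in $G$; the goal is to show they are $G$-conjugate. Set $J := K_1$, so that $G = N \semi J$ with $J$ acting on $N$ by conjugation, and take $\Omega := G/K_2$, the left coset space, with $G$ acting by left translation. Because $K_2$ complements $N$, we have $G = NK_2$, so $N$ acts transitively on $\Omega$. A subgroup $H \leq G$ fixes a coset $gK_2$ precisely when $H \leq gK_2g^{-1}$; applied to $H = J$, comparing orders $\abs{J} = \abs{K_2}$ forces equality, so a $J$-fixed point in $\Omega$ is exactly a witness of $G$-conjugacy of $K_1$ and $K_2$.

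It remains to verify the Sylow hypothesis of Theorem~\ref{thm:ab}: for each prime $p$, some Sylow $p$-subgroup of $J$ fixes a coset in $\Omega$. Let $P$ and $Q$ be Sylow $p$-subgroups of $K_1$ and $K_2$, respectively; both have order $\abs{G}_p/\abs{N}_p$, so showing that $P$ is $G$-conjugate to $Q$ will place $P$ inside a $G$-conjugate of $K_2$ and hence fix a coset. For this I would take the Sylow $p$-subgroup $S$ of $G$ supplied by the hypothesis and, after replacing $P$ and $Q$ by suitable $G$-conjugates, arrange $P,Q \leq S$. Since $N \normal G$, the intersection $N \cap S$ is a Sylow $p$-subgroup of $N$ of order $\abs{N}_p$; combined with $P \cap N = Q \cap N = 1$ and $\abs{P}\abs{N \cap S} = \abs{S} = \abs{Q}\abs{N \cap S}$, this exhibits $P$ and $Q$ as complements of $N \cap S$ in $S$. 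The hypothesis then gives $P \sim_S Q$, whence $P \sim_G Q$, as needed.

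Assembling these pieces, Theorem~\ref{thm:ab} delivers a $J$-fixed coset in $\Omega = G/K_2$, which translates to $K_1 = gK_2g^{-1}$ for some $g \in G$. The only mildly subtle step is recognising $\Omega = G/K_2$ as the correct $G$-set, since this choice converts the conjugacy problem into a fixed-point problem while preserving the transitivity of $N$; once that is in place, the verification of the Sylow condition is a routine order count against the given hypothesis, and I do not anticipate any further obstacle.
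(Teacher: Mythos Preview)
Your proof is correct and matches the paper's approach: both reduce to Theorem~\ref{thm:ab} via the left coset action on $G/K_2$, with your version spelling out the Sylow verification that the paper leaves implicit. One small remark: the hypothesis of the corollary only guarantees $G$-conjugacy of the two complements of $N\cap S$ in $S$ (this is exactly what distinguishes the corollary from Higman's result), so you should write $P\sim_G Q$ rather than $P\sim_S Q$---but this does not affect the argument, since $G$-conjugacy is all you actually use.
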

This extends a result of D.\,G. Higman~\cite[Cor. 2]{Hig54} that requires the
complements of $N\cap S$ in $S$ to be conjugate \emph{within $S$}.

We then consider nilpotent $N$ and supersoluble $N\semi J$. We adapt our
approach for Lemma~\ref{lem:ab} to nonabelian cohomology and demonstrate:
\begin{lemma}
	\label{lem:nil}
	In a finite supersoluble group, two complements of a normal nilpotent
	subgroup are conjugate if and only if they are locally conjugate.
\end{lemma}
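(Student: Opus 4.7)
The ``only if'' direction is immediate. For the converse, I would mimic the cohomological approach underlying Lemma~\ref{lem:ab}, proceeding by induction on the nilpotency class $c$ of $N$. The base case $c = 1$ (i.e.\ $N$ abelian) is Lemma~\ref{lem:ab} itself.

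For $c \geq 2$, set $Z = Z(N)$, a characteristic abelian subgroup of $N$, hence normal in $G$. The quotient $G/Z$ is supersoluble, and $JZ/Z, J'Z/Z$ are locally conjugate complements of $N/Z$, which is nilpotent of class $c-1$. By the inductive hypothesis, these images are conjugate in $G/Z$, so after replacing $J$ by a suitable $G$-conjugate I may assume $JZ = J'Z =: H$. Within the supersoluble group $H$, the subgroups $J$ and $J'$ complement the normal abelian subgroup $Z$, and so by Lemma~\ref{lem:ab} applied inside $H$ it suffices to establish their local conjugacy in $H$.

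Fix a prime $p$. Local conjugacy in $G$ yields Sylow $p$-subgroups $J_p \leq J$, $J'_p \leq J'$ and $g \in G$ with $g^{-1} J_p g = J'_p$; decomposing $g = jn$ with $j \in J$ and $n \in N$ and replacing $J_p$ by $j^{-1} J_p j$, I reduce to the case $g = n \in N$. Writing $J' = \{c(x)x : x \in J\}$ for a cocycle $c \colon J \to Z$ (the values lie in $Z$ because $N \cap H = Z$), the task becomes showing that $[c|_{J_p}] = 0$ in $H^1(J_p, Z)$. The conjugator $n$ presents $c|_{J_p}$ as a coboundary in $N$, so the class lies in the kernel of the natural map $H^1(J_p, Z) \to H^1(J_p, N)$; via the long exact sequence of nonabelian cohomology associated to $1 \to Z \to N \to N/Z \to 1$, that kernel is the image of the connecting map $\delta \colon (N/Z)^{J_p} \to H^1(J_p, Z)$.

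I expect the main obstacle to be showing that this obstruction vanishes. The supersolubility of $G$ should be the crucial input: every chief factor of $G$ contained in the Sylow $p$-part $Z_p$ of $Z$ is cyclic of order $p$, and $J_p$ (being a $p$-group) acts trivially on each such factor, since $\Aut(\mathbb{F}_p) \iso \mathbb{F}_p^\times$ has order $p-1$, coprime to $p$. Thus $J_p$ acts unipotently on $Z_p$; I would aim to leverage this unipotency, together with the nilpotent structure of $N$, to force the obstruction trivial for the class at hand, so that a final appeal to Lemma~\ref{lem:ab} completes the argument.
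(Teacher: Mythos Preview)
Your strategy diverges from the paper's and, more importantly, breaks down at exactly the point you flag as the ``main obstacle''. The difficulty is not merely technical: the reduction to $H=JZ$ can genuinely lose the conjugacy information, so no amount of unipotency bookkeeping on $Z_p$ will rescue the argument as stated.

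Here is a small example. Let $p$ be an odd prime, let $N=\langle a,b\rangle$ be extraspecial of order $p^3$ with centre $Z=\langle z\rangle$ where $z=[a,b]$, and let $J=\langle j\rangle\iso C_p$ act by $a^j=a$, $b^j=bz$. Then $G=N\semi J$ is a $p$-group, hence supersoluble, and $J'=J^{b}=\langle jz^{-1}\rangle$ is a complement of $N$ that is $G$-conjugate (so certainly locally conjugate) to $J$. One checks $JZ=J'Z=H=\langle j,z\rangle\iso C_p\times C_p$, so your inductive step in $G/Z$ returns the identity conjugator and leaves $H$ unchanged. But $H$ is abelian, so $J$ and $J'$ are $H$-conjugate only if they are equal, which they are not. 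Equivalently, the class $[c|_{J_p}]\in H^1(J_p,Z)$ that you need to kill is precisely $\delta(bZ)$, and it is nonzero. Thus local conjugacy in $H$ fails even though the lemma's conclusion holds trivially here; your route through Lemma~\ref{lem:ab} inside $H$ is blocked.

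The paper proceeds quite differently. Rather than peeling off $Z(N)$, it first uses nilpotency of $N$ to split $H^1(J,N)\iso\oplus_p H^1(J,N_p)$ along the Sylow decomposition of $N$, and then, for each $p$-group $N_p$, proves that $\res^J_{J_p}\colon H^1(J,N_p)\to\inv_J H^1(J_p,N_p)$ is a bijection by inducting on $\abs{J}$. The induction exploits supersolubility on the $J$ side: the Sylow subgroup for the largest prime divisor of $\abs{J}$ is normal, which feeds into the inflation--restriction exact sequence and lets one strip off one prime of $J$ at a time. Once the restriction isomorphism is in hand, local conjugacy (which gives $\phi'|_{J_p}\sim 1$ for every $p$) forces $\phi'\sim 1$ exactly as in Lemma~\ref{lem:ab}. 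If you want to salvage an inductive scheme, inducting on $\abs{J}$ via this normal Sylow subgroup is the move that actually uses supersolubility in an essential way.
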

With this, we then show:
\begin{theorem} 
	\label{thm:nil} 
	Given a finite group $J$ acting via automorphisms on a finite nilpotent
	group $N$, suppose the induced semidirect product $N\semi J$ is
	supersoluble and acts on some non-empty set $\Omega$ where the action of
	$N$ is transitive. If for each prime $p$, a Sylow $p$-subgroup of $J$ fixes
	an element of $\Omega$, then there exists some $J$-invariant element
	$\omega\in\Omega$.
\end{theorem}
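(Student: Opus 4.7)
The plan is to proceed by strong induction on $|N|$; the base case $|N|=1$ is vacuous. For the inductive step, supersolubility of $G=N\semi J$ furnishes a minimal normal subgroup $M$ of $G$ with $M\le N$, necessarily cyclic of some prime order $p$. The quotient $G/M=(N/M)\semi(JM/M)$ acts on the set $\Omega/M$ of $M$-orbits, with $N/M$ nilpotent and transitive, and $G/M$ supersoluble; for each prime $q$, the image in $\Omega/M$ of a $J_q$-fixed point is fixed by $J_qM/M$, a Sylow $q$-subgroup of $JM/M\iso J$, so the Sylow hypothesis passes to the quotient. The inductive hypothesis therefore produces a $J$-invariant $M$-orbit $\Delta\subseteq\Omega$, on which the subgroup $M\semi J\le G$ acts with abelian $M$ transitive.

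I would then invoke Theorem~\ref{thm:ab} on this restricted action, which requires that for each prime $q$ dividing $|J|$ some Sylow $q$-subgroup of $J$ fix a point in $\Delta$. For $q\ne p$ this is automatic from the orbit congruence $|\Delta^{J_q}|\equiv|\Delta|\pmod{q}$: since $|\Delta|$ divides $|M|=p$ and is coprime to $q$, the right-hand side is nonzero modulo $q$, forcing a fixed point.

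The remaining case $q=p$ is the main obstacle. Both $|\Delta|$ and $|J_p|$ are $p$-powers, the orbit congruence is vacuous, and a $J_p$-fixed point $\alpha_p\in\Omega$ need not lie in $\Delta$. To bridge the gap the plan is to strengthen the inductive conclusion so that it produces a $J$-invariant $M$-orbit meeting $\Fix(J_p,\Omega)$. The two relevant nonempty collections of $M$-orbits---the $J$-invariant ones, and those meeting $\Fix(J_p,\Omega)$---both carry natural actions by subgroups of $G$, and a Frattini-style or Sylow-transfer argument, exploiting that $N_pJ_p$ is a Sylow $p$-subgroup of $G$ containing $J_p$, should force a common $M$-orbit. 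Once such a $\Delta$ is secured, Theorem~\ref{thm:ab} applied to $M\semi J$ acting on $\Delta$ delivers the desired $J$-fixed point.
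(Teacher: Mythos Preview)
Your reduction to a $J$-invariant $M$-orbit $\Delta$ via the quotient action on $\Omega/M$ is fine, and the $q\ne p$ case is handled correctly by the fixed-point congruence. The gap is exactly where you say it is, and your proposed fix does not close it. First, ``strengthen the inductive conclusion'' has no content here: the conclusion of the theorem is already that $J$ fixes some $\omega$, and such an $\omega$ is automatically $J_p$-fixed; there is nothing stronger to carry through the induction on $|N|$ that would pin down \emph{which} $M$-orbit you obtain. Second, the obstruction is genuine, not an artifact of the argument. When $|\Delta|=p$, the group $MJ_p$ is a $p$-group acting on $\Delta$ with $M$ regular, so its image in $\mathrm{Sym}(\Delta)$ is exactly the regular $C_p$; the induced map $J_p\to C_p$ (sending $j$ to the translation $m_j$ with $j\cdot\beta=m_j\beta$) is a homomorphism, and $J_p$ has a fixed point in $\Delta$ if and only if this homomorphism is trivial. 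Nothing in the inductive hypothesis, and no Frattini/Sylow transfer involving $N_pJ_p\in\Syl_p(G)$, forces this homomorphism to vanish for the particular $\Delta$ produced by induction.

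The paper proceeds differently and does not try to reduce to Theorem~\ref{thm:ab}. It first proves, via a nonabelian inflation--restriction argument exploiting that in a supersoluble group the Sylow subgroup for the largest prime is normal, that for $N$ a $p$-group the restriction $\res^J_{J_p}:H^1(J,N)\to\inv_J H^1(J_p,N)$ is a bijection (Proposition~\ref{prop:nilp}); this yields Lemma~\ref{lem:nil}, that locally conjugate complements of a normal nilpotent subgroup in a supersoluble group are conjugate. Proposition~\ref{prop:nil_split} then shows, by induction on $|G|$, that any subgroup $H\le G$ containing a conjugate of each Sylow subgroup of $J$ contains a conjugate of $J$; the critical step in that induction---when $N$ is a $p$-group and the chosen minimal normal $A$ meets $H$ trivially---is resolved precisely by invoking Lemma~\ref{lem:nil}. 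Theorem~\ref{thm:nil} is then immediate: the stabiliser $G_\alpha$ contains a conjugate of each $J_p$ by hypothesis, hence a conjugate of $J$ by Proposition~\ref{prop:nil_split}. The $p$-versus-$p$ difficulty you isolated is exactly the place where the paper needs the cohomological Lemma~\ref{lem:nil}; an elementary orbit argument does not appear to suffice.
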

The theorem also implies an analogue of Corollary~\ref{cor:ab} that we state
and prove in \S~\ref{s:nil}.

\subsection{Outline}
We proceed as follows.  In the remainder of this section, we introduce notation
and some conventions from group cohomology. In the next section, we restrict
$N$ to be abelian and prove Theorem~\ref{thm:ab}. We then restrict $N$ to be
nilpotent and $N\semi J$ to be supersoluble in \S~\ref{s:nil} and prove
Theorem~\ref{thm:nil}, before concluding in \S~\ref{s:conc}.

\subsection{Notation and conventions}
All groups in this note are assumed finite. A subgroup $K\leq G$ supplements
$N\normal G$ if $G=NK$ and complements $N$ if it both supplements $N$ and the
intersection $N\cap K$ is trivial. We denote conjugation by $g^\gamma =
	\gamma^{-1}g\gamma$ for $g,\gamma\in G$ and otherwise let groups act from the
left. For a prime $p$, we let $\Syl_p(G)$ denote the set of Sylow $p$-subgroups
of a group $G$.

We rely on rudimentary notions from group cohomology that can be found in the
texts of Brown~\cite{Bro82} and Serre~\cite{Ser02}. Given a group $J$ acting on
a group $N$ via automorphisms, crossed homomorphisms or 1-cocycles are maps
$\phi: J \to N$ satisfying $\phi(jj') = \phi(j) \phi(j')^{j^{-1}}$ for all
$j,j' \in J$. Two such maps $\phi$ and $\phi'$ are cohomologous if there exists
$n\in N$ such that $\phi'(j) =  n^{-1} \phi(j) n^{j^{-1}}$ for all $j\in J$; in
this case, we write $\phi\sim\phi'$. We take the first cohomology $H^1(J,N)$ to
be the pointed set $Z^1(J,N)$ of crossed homomorphisms modulo this equivalence.
The distinguished point corresponds to the equivalence class containing the map
taking each element of $J$ to the identity of $N$. Our interest in this set
stems primarily from the well-known bijective correspondence~\cite[Exer. 1 in
\S I.5.1]{Ser02} between it and the $N$-conjugacy classes of complements to $N$
in $N\semi J$. Specifically, for each $\phi \in Z^1(J,N)$, the subgroup
$F(\phi) = \{ \phi(j)j \}_{j\in J}$ complements $N$ in $NJ$ and all such
complements may be written in this way. Two crossed homomorphisms yield
conjugate complements under $F$ if and only if they are cohomologous, so $F$
induces the desired correspondence.

For a subgroup $K\leq J$, we let $\phi|_{K}$ denote the restriction of $\phi\in
	Z^1(J,N)$ to $K$ and $\res^J_K: H^1(J,N) \to H^1(K,N)$ be the map induced in
cohomology. For $\phi \in Z^1(K,N)$ and $j\in J$, define $\phi^j(x) =
	\phi(x^{j^{-1}})^{j}$. We call $\phi$ $J$-invariant if $\res^K_{K\cap K^j} \phi
	\sim \res^{K^j}_{K\cap K^j} \phi^j$ for all $j\in J$ and let $\inv_J H^1(K,N)$
denote the set of $J$-invariant elements in $H^1(K,N)$. For any $\phi \in
	Z^1(J,N)$, we have $\phi^j(x) = n^{-1} \phi(x) n^{x^{-1}}$ where $n=
	\phi(j^{-1})$ so that $\phi^j \sim \phi$. In particular, $\res^J_K H^1(J,N)
	\subseteq \inv_J H^1(K, N)$.

\section{$N$ is abelian}
\label{s:ab}

In this section, we restrict $N$ to be abelian so that $H^1(J,N)$ takes the
form of an abelian group. We first prove Lemma~\ref{lem:ab} as stated in
\S~\ref{s:intro}.

\begin{proof}[Proof of Lemma~\ref{lem:ab}] \quad 
	Suppose we are given locally conjugate complements $J$ and $J'$ of a normal
	abelian subgroup $N$ in some group $G$. As any element $g\in G$ may be
	uniquely written $g=jn$ for $j\in J$ and $n\in N$, for each prime $p$ we have
	$J_p' = (J_p)^n$ for some $J_p\in \Syl_p(J)$, $J_p' \in\Syl_p(J')$, and
	$n\in N$. Let $\phi'\in Z^1(J,N)$ denote the crossed homomorphism
	corresponding to $J'$. It suffices to show that $\phi'\sim 1$, where $1\in
	Z^1(J,N)$ denotes the map taking each element of $J$ to the identity of
	$N$. Through the $p$-primary decomposition of $H^1(J,N)$, we have the
	isomorphism~\cite[\S III.10]{Bro82}:
	\begin{equation}
		\label{eq:decomp}
		H^1(J, N) \iso \oplus_{p\in\mathcal{D}} \inv_J H^1(J_p, N)
	\end{equation}
	where $\mathcal{D}$ is the set of prime divisors of $\abs{J}$ and the $J_p$
	are those given above. For every $p\in \mathcal D$, we see that
	$\phi'|_{J_p}\sim 1|_{J_p}$ as $J_p$ and $J_p'$ are $N$-conjugate
	complements of $N$ in $NJ_p$. Thus, $\phi'$ maps to the identity in each
	direct summand on the right hand side of~\eqref{eq:decomp} and we may
	conclude $\phi'\sim 1$ so that $J$ and $J'$ are conjugate.
\end{proof}

We can now use the lemma and Gasch\"utz's theorem to prove Theorem~\ref{thm:ab}.
\begin{proof}[Proof of Theorem~\ref{thm:ab}] \quad 
	Given $J$, $N$, and $\Omega$ as described in the hypotheses of the theorem,
	let $G= N \semi J$ denote the induced semidirect product and consider the
	stabiliser subgroup $G_\alpha$ for some fixed $\alpha\in\Omega$. As $N$
	acts transitively, any $g\in G$ may be written $g\cdot\alpha = n\cdot
	\alpha$ for some $n\in N$, so that $n^{-1}g \in G_\alpha$. Thus,
	$G=NG_\alpha$.

	We claim $G_\alpha$ splits over $N\cap G_\alpha$. For any prime $p$, there
	exists by hypothesis some $n\in N$ and $P \in \Syl_p(J)$ such that $P^n
		\leq G_\alpha$. Let $L\in \Syl_p(N\cap G_\alpha)$. As $\abs{G_\alpha} =
		\abs{N\cap G_\alpha}[G:N]$, it follows that $S=LP^n \in \Syl_p(G_\alpha)$
	so $P^n$ complements $S \cap N = L$ in $S$. As the choice of prime $p$ was
	arbitrary, we may apply Gasch\"utz's theorem to conclude that $G_\alpha$
	splits over $N\cap G_\alpha$.

	Let $J'$ complement $N\cap G_\alpha$ in $G_\alpha$. As $G/N \iso G_\alpha /
		(N \cap G_\alpha)$, it follows that $J'$ also complements $N$ in $G$.
	Lemma~\ref{lem:ab} then implies that $J' = J^g$ for some $g\in G$ so
	that $J$ fixes $\omega= g\cdot \alpha$.
\end{proof}

Finally, we outline how Corollary~\ref{cor:ab} follows from
Theorem~\ref{thm:ab}.
\begin{proof}[Proof of Corollary~\ref{thm:ab}] \quad 
	Given a group $G$ satisfying the hypotheses of the corollary, suppose $J$
	and $J'$ each complement $N$ in $G$. Then $G$ acts on the cosets $\Omega =
	G/J'$ in such a way that we may apply Theorem~\ref{thm:ab} to infer that
	$J$ fixes $gJ'$ for some $g\in G$. Therefore, $J$ and $J'$ are conjugate.
	As the choice of complements was arbitrary, we may conclude.
\end{proof}

\section{$N$ is nilpotent and $N\semi J$ is supersoluble}
\label{s:nil}

In this section, we suppose that $N$ is nilpotent and $N \semi J$ is
supersoluble. Consequently, $N$ decomposes as the direct sum $N\iso
	\oplus_{p\in\mathcal D} N_p$ over its characteristic Sylow $p$-subgroups $N_p$
where $\mathcal D$ denotes the set of prime divisors of $\abs{N}$. Direct
calculations show that the natural projections $N \to N_p$ induce an
isomorphism of pointed sets
\begin{equation}
	\label{eq:prod}
	H^1(J,N) \iso \oplus_{p\in D} H^1(J,N_p).
\end{equation}
To parse the components on the right hand side of~\eqref{eq:prod}, we introduce
the following:

\begin{proposition}
	\label{prop:nilp}
	Suppose a group $J$ acts on a $p$-group $N$ via automorphisms, so that the
	induced semidirect product $N\semi J$ is supersoluble. Then $\res_{J_p}^J:
	H^1(J,N) \to \inv_J H^1(J_p,N)$ is an isomorphism for $J_p\in\Syl_p(J)$.
\end{proposition}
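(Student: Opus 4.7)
I plan to treat injectivity and surjectivity separately, using Lemma~\ref{lem:nil} directly for the former and an induction on $\abs{N}$ combined with a central-extension diagram chase for the latter. For injectivity, if $\phi, \phi' \in Z^1(J, N)$ satisfy $\phi|_{J_p} \sim \phi'|_{J_p}$, then the associated complements $F(\phi)$ and $F(\phi')$ of $N$ in $G = N\semi J$ share $N$-conjugate Sylow $p$-subgroups inside the Sylow $p$-subgroup $NJ_p$ of $G$. For each prime $q\ne p$, we have $\abs{G}_q = \abs{J}_q$ since $N$ is a $p$-group, so the Sylow $q$-subgroups of $F(\phi)$ and $F(\phi')$ are themselves Sylow $q$-subgroups of $G$, hence $G$-conjugate. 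The complements are therefore locally conjugate in $G$, and Lemma~\ref{lem:nil} supplies $g\in G$ with $F(\phi)^g = F(\phi')$, whence $\phi \sim \phi'$.

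For surjectivity, the induction on $\abs{N}$ has the case $N = 1$ trivial. For the inductive step, $Z(N)$ is nontrivial and characteristic in $N$, hence normal in $G$; supersolubility of $G$ then produces a subgroup $M \le Z(N)$ of order $p$ that is normal in $G$. The quotient $G/M = (N/M) \semi J$ remains supersoluble with $N/M$ a $p$-group of strictly smaller order, so by induction $\res\colon H^1(J, N/M) \to \inv_J H^1(J_p, N/M)$ is a bijection. The central extension $1 \to M \to N \to N/M \to 1$ induces, via~\cite[\S I.5.7]{Ser02}, a six-term exact sequence of pointed sets for each of $J$ and $J_p$, which I would assemble into a commutative diagram linked by vertical restriction maps.

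Given $\bar\psi \in \inv_J H^1(J_p, N)$, I push it to $\inv_J H^1(J_p, N/M)$, lift by induction to $\hat\chi \in H^1(J, N/M)$, and observe that the obstruction to further lifting $\hat\chi$ into $H^1(J, N)$ lies in $H^2(J, M)$. This obstruction restricts to zero in $H^2(J_p, M)$ (since $\bar\psi$ itself already lifts), and the extension of~\eqref{eq:decomp} to degree two, valid for the abelian $p$-group $M$ by the same $p$-primary decomposition, forces the obstruction to vanish in $H^2(J, M)$. The resulting preliminary lift $\tilde\phi \in H^1(J, N)$ differs from $\bar\psi$ under restriction by an element of $\inv_J H^1(J_p, M)$, which~\eqref{eq:decomp} lifts back to $H^1(J, M)$; translating $\tilde\phi$ by this lift yields the desired $\phi \in H^1(J, N)$ with $\res(\phi) = \bar\psi$.

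The main obstacle will be executing this diagram chase rigorously in the pointed-set framework of~\cite[\S I.5.7]{Ser02}: one must verify that $J$-invariance is preserved by the connecting maps and that the torsor action of $H^1(-, M)$ on the fibres of $H^1(-, N) \to H^1(-, N/M)$ is compatible with restriction, manoeuvres that are routine for abelian cohomology but demand care here.
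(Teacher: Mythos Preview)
Your injectivity argument is circular. You invoke Lemma~\ref{lem:nil} to conclude that locally conjugate complements of $N$ in the supersoluble group $G=N\semi J$ are conjugate, but in this paper Lemma~\ref{lem:nil} is \emph{deduced} from Proposition~\ref{prop:nilp} (via Proposition~\ref{prop:nil}). There is no independent proof of Lemma~\ref{lem:nil} available here: the Losey--Stonehewer result you might have in mind requires either $N$ abelian or $G/N$ nilpotent, and in the present setting $G/N\iso J$ is merely supersoluble. So the injectivity half of your plan has no foundation.

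The paper instead argues by induction on $\abs{J}$, not $\abs{N}$: it peels off a normal Sylow $q$-subgroup $Q\normal J$ for the largest prime $q$ dividing $\abs{J}$ (available because $J$ is supersoluble), writes $J\iso Q\semi M$, and feeds this into the inflation--restriction sequence $1\to H^1(J/Q,N^Q)\to H^1(J,N)\to H^1(Q,N)^{J/Q}$. When $q=p$ the first term vanishes by Schur--Zassenhaus, giving injectivity directly, and an explicit cocycle extension handles surjectivity; when $q\ne p$ one shows $\res^J_M$ is an isomorphism onto $H^1(M,N)$ and appeals to induction. No appeal to local conjugacy of complements is needed.

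Your surjectivity strategy---induction on $\abs{N}$ via a central order-$p$ subgroup $M\normal G$ and the six-term sequence of~\cite[\S I.5.7]{Ser02}---is a genuinely different route and is not obviously wrong, but the step you flag is indeed delicate: the action of $H^1(J_p,M)$ on the fibres of $H^1(J_p,N)\to H^1(J_p,N/M)$ is transitive but need not be free, so the element $\mu\in H^1(J_p,M)$ carrying $\res(\tilde\phi)$ to $\bar\psi$ is not uniquely determined, and one must argue that \emph{some} choice of $\mu$ is $J$-invariant before~\eqref{eq:decomp} can lift it. This can likely be pushed through, but it is considerably more work than the paper's explicit cocycle constructions, and it still leaves you without an injectivity argument.
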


\begin{proof} \quad 
	We induct on the order of $J$. If $J$ itself is a $p$-group, the conclusion
	is immediate. If $p$ is not a divisor of $\abs{J}$, the lemma follows from
	the Schur--Zassenhaus theorem. Otherwise, let $Q\normal J$ be a Sylow
	$q$-subgroup where $q$ is the largest prime divisor of
	$\abs{J}$~\cite[exer. 3B.10]{Isa08} so that $J\iso Q \semi M$ for some Hall
	$q'$-subgroup $M\leq J$. Consider the inflation-restriction exact
	sequence~\cite[\S I.5.8]{Ser02},
	\begin{equation} \label{eq:inf_res}
		1 \to H^1(J/Q, N^Q) \to H^1(J,N) \xrightarrow{\res^J_Q} H^1(Q,N)^{J/Q}
	\end{equation}
	where $N^Q$ denotes the elements of $N$ fixed by $Q$.

	If $q\ne p$, then $H^1(Q,N)$ is trivial so that $ H^1(J,N) \iso H^1(M,
	N^Q)$. In the supersoluble group $NQ$, $Q$ is a Sylow $q$-subgroup for the
	largest prime divisor of $\abs{NQ}$, so that $Q\normal N Q$ and $N^Q = N$.
	Consequently, $H^1(J,N) \iso H^1(M, N)$. We claim that $\res^J_M$ affords
	this isomorphism. It suffices to show that $\res^J_M$ is surjective. For
	any $\phi\in Z^1(M, N)$, we may define $\tilde\phi: J \to N$ by
	$\tilde\phi(qm) = \phi(m)$ for $q\in Q$ and $m\in M$. This map is
	well-defined as $J\iso Q \semi M$. For $q,q'\in Q$ and $m,m'\in M$, we have
	$\tilde\phi(qmq'm') = \phi(mm') = \phi(m)\phi(m')^{m^{-1}} =
	\tilde\phi(qm)\tilde\phi(q'm')^{(qm)^{-1}}$, where the last equality
	follows from the fact that elements of $N$ commute with elements of $Q$.
	Thus, $\tilde\phi \in Z^1(J, N)$. As $\tilde\phi|_M = \phi$, we conclude
	$\res^J_M$ is surjective.
	
	Exchanging $M$ for a conjugate if
	necessary, we may assume that $J_p \leq M$. As $\res^M_{J_p}$ is injective by
	induction, it follows that the composition $\res^J_{J_p} = \res^M_{J_p} \circ
		\res^J_M$ is also injective. On the other hand,
	\[
		\inv_J H^1(J_p, N) \subseteq \inv_{M} H^1(J_p,N)
		= \res^M_{J_p} H^1(M,N) \subseteq  \res^J_{J_p} H^1(J,N)
	\]
	where the equality above follows from the inductive hypothesis, so that
	$\res^J_{J_p}$ is surjective.

	Otherwise, $q=p$, so that  $J_p=Q$ is a Sylow $p$-subgroup of $J$. In this
	case, $H^1(J/Q, N^Q)$ is trivial in~\eqref{eq:inf_res} and so
	$\res^J_{J_p}$ is injective. As $H^1(Q,N)^{J/Q} = \inv_J H^1(Q,N)$, it
	remains to show that this map is surjective. For $M$-invariant $\phi \in
	Z^1(J_p, N)$, define $\tilde \phi: J \to N$ by $\tilde \phi(hm) = \phi(h)$
	for $h\in J_p$ and $m\in M$. Then for any $h,h' \in J_p$ and $m, m' \in M$,
	we have $\tilde \phi(hmh'm') = \phi(h (h')^{m^{-1}}) = \phi(h)
	\phi((h')^{m^{-1}})^{h^{-1}} = \phi(h) \phi(h')^{m^{-1}h^{-1}} = \tilde
	\phi(hm) \tilde \phi(h'm')^{(hm)^{-1}}$ where the third equality follows
	from $\phi$ being $M$-invariant. As $J\iso J_p \semi M$, we conclude that
	$\tilde \phi \in Z^1(J, N)$. Clearly, $\res^J_{J_p} \tilde \phi \sim \phi$
	so that $\res^J_{J_p}$ is surjective.
\end{proof}

For each prime $p$, we may apply Proposition~\ref{prop:nilp} to the component
for $p$ in~\eqref{eq:prod} and find that $H^1(J,N_p)\iso \inv_J H^1(J_p, N_p)
\iso \inv_J H^1(J_p, N)$ for some $J_p \in \Syl_p(J)$. In particular, it
follows that:

\begin{proposition}
	\label{prop:nil}
	Given a group $J$ acting on a nilpotent group $N$ via automorphisms so that
	$N\semi J$ is supersoluble, the restriction maps $\res^J_{J_p}$ induce an
	isomorphism of pointed sets $H^1(J, N) \iso \oplus_{p\in\mathcal{D}} \inv_J
	H^1(J_p, N)$ where $\mathcal{D}$ denotes the set of prime divisors of
	$\abs{J}$ and $J_p\in\Syl_p(J)$ for each $p\in\mathcal D$.
\end{proposition}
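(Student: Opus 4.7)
The plan is to combine the direct sum decomposition~\eqref{eq:prod} with Proposition~\ref{prop:nilp}, as foreshadowed in the paragraph preceding the statement. Let $\mathcal{D}_N$ denote the prime divisors of $\abs{N}$ and $\mathcal{D}$ those of $\abs{J}$. Starting from~\eqref{eq:prod}, I would apply Proposition~\ref{prop:nilp} to each summand: for $p\in\mathcal{D}_N$, the characteristic subgroup $N_p$ is normal in $N\semi J$, so $N_pJ$ is a subgroup of the supersoluble group $N\semi J$ and is itself supersoluble; the proposition then yields $H^1(J,N_p) \iso \inv_J H^1(J_p,N_p)$ via $\res^J_{J_p}$.

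To replace $N_p$ by $N$ on the right, I would reapply~\eqref{eq:prod} with $J_p$ in place of $J$: for $q\ne p$, $J_p$ is a $p$-group acting on the $q$-group $N_q$ with coprime orders, so $H^1(J_p,N_q)$ vanishes by the Schur--Zassenhaus theorem, and the $J$-equivariant inclusion $N_p\hookrightarrow N$ therefore induces an isomorphism $H^1(J_p,N_p)\iso H^1(J_p,N)$ that restricts to $J$-invariants.

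The resulting composite isomorphism is indexed over $\mathcal{D}_N$, but the reindexing to $\mathcal{D}$ is harmless: for $p\in\mathcal{D}_N\setminus\mathcal{D}$ we have $J_p=1$, while for $p\in\mathcal{D}\setminus\mathcal{D}_N$ we have $N_p=1$, so both $H^1(J,N_p)$ and $\inv_J H^1(J_p,N)$ vanish at such primes. The only step requiring genuine care — and the main obstacle — is verifying that the composite isomorphism is indeed induced by the restriction maps $\res^J_{J_p}$ as claimed in the statement; this follows from the naturality of~\eqref{eq:prod} with respect to restriction in the first argument, together with the fact that the projection $N\twoheadrightarrow N_p$ followed by the inclusion $N_p\hookrightarrow N$ is the identity on $N_p$.
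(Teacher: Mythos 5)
Your proposal is correct and follows essentially the same route as the paper, whose entire justification is the paragraph preceding the statement: apply Proposition~\ref{prop:nilp} to each summand of~\eqref{eq:prod} and identify $\inv_J H^1(J_p,N_p)$ with $\inv_J H^1(J_p,N)$ via the coprime vanishing of $H^1(J_p,N_q)$ for $q\ne p$. Your added checks (supersolubility of $N_pJ$, the reindexing between prime divisors of $\abs{N}$ and $\abs{J}$, and that the composite is induced by $\res^J_{J_p}$) are details the paper leaves implicit, and they are handled correctly.
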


We are now prepared to provide a proof of Lemma~\ref{lem:nil}.
\begin{proof}[Proof of Lemma~\ref{lem:nil}] \quad 
	In a supersoluble group $G$, suppose $J$ and $J'$ are locally conjugate
	complements of a normal nilpotent subgroup $N$. As in Lemma~\ref{lem:ab},
	we have for each prime $p$ that some $J_p\in\Syl_p(J)$ and
	$J_p'\in\Syl_p(J')$ are conjugate by an element of $N$. Let $\phi'\in
	Z^1(J,N)$ denote the map corresponding to $J'$. As the isomorphism in
	Proposition~\ref{prop:nil} is induced by restriction maps, it takes the
	identity $1\in H^1(J, N)$ to $\oplus_{p\in\mathcal{D}} 1|_{J_p}$. Thus, as
	$\phi'|_{J_p}\sim1|_{J_p}$ for each $p\in\mathcal D$, we may apply
	Proposition~\ref{prop:nil} to conclude $\phi'\sim 1$ so that $J$ and $J'$
	are conjugate.
\end{proof}

We now use Lemma~\ref{lem:nil} to show:

\begin{proposition}
	\label{prop:nil_split}
	Let $H$ be a subgroup of some supersoluble $G\iso N \semi J$ where $N$ is
	nilpotent. If for each prime $p$, $H$ contains a conjugate of some $S\in
	\Syl_p(J)$, then $H$ contains a conjugate of $J$ and so splits over $N\cap
	H$.
\end{proposition}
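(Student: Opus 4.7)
The plan is to mirror the proof of Theorem~\ref{thm:ab}, with a nilpotent analogue of Gasch\"utz's theorem in place of the abelian version. I would first show $G = NH$, then produce a complement $J'$ of $N \cap H$ in $H$ that is locally $G$-conjugate to $J$, and finally invoke Lemma~\ref{lem:nil} to conclude $J' = J^x$ for some $x \in G$; the splitting of $H$ over $N \cap H$ then follows because $J^x \leq H$ complements $N \cap H$.

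For $G = NH$: each Sylow $p$-subgroup $N_p$ of $N$ is characteristic (since $N$ is nilpotent) and hence normal in $G$. For each prime $p$, the hypothesis furnishes $J_p \in \Syl_p(J)$ and $g \in G$ with $J_p^g \leq H$, whence $N_p J_p^g$ is a subgroup of $NH$ of order $|N|_p|J|_p = |G|_p$---a Sylow $p$-subgroup of $G$. Ranging over $p$ gives $|NH| = |G|$, so $NH = G$ and $H/(N \cap H) \iso J$. This same conjugate $J_p^g$ serves as a local complement: $J_p^g \cap (N \cap H) = 1$ and $|J_p^g| = |J|_p$ equals the $p$-part of $[H : N \cap H]$.

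Next I would construct a complement $J' \leq H$ of $N \cap H$ by piecing together these local complements. Since Gasch\"utz's theorem requires an abelian kernel, I would iterate it along a characteristic abelian series of the nilpotent group $N \cap H$, for example by first applying Gasch\"utz to $Z(N \cap H)$ (which is abelian, characteristic in $N \cap H$, and normal in $H$) and then descending to the smaller problem of splitting $H/Z(N \cap H)$ over $(N \cap H)/Z(N \cap H)$. At each stage the Sylow complementation data supplied by $J_p^g$ provides the hypothesis of Gasch\"utz. The resulting $J'$ has order $|J|$ and trivial intersection with $N$, so $J'$ also complements $N$ in $G$; by construction its Sylow $p$-subgroups are $G$-conjugate to $J_p$ for each $p$, so $J$ and $J'$ are locally conjugate. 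Lemma~\ref{lem:nil} then yields $J' = J^x$ for some $x \in G$, and $J^x \leq H$ is the desired conjugate of $J$.

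The main obstacle is the splitting step: Gasch\"utz's theorem is abelian, so the reduction along an abelian series of $N \cap H$ must be executed with care, ensuring that at each stage the inductive Sylow data remains available and that the local $G$-conjugacy between the eventual Sylow $p$-subgroup of $J'$ and $J_p$ is preserved by the construction.
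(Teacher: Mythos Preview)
Your argument that $G=NH$ is correct, and the overall plan---produce a complement $J'\leq H$ of $N\cap H$, verify it is locally conjugate to $J$, then apply Lemma~\ref{lem:nil}---is the right shape. The gap lies in both middle steps. Iterating Gasch\"utz along an abelian series of $M:=N\cap H$ does not go through as sketched: after splitting $H/Z(M)$ over $M/Z(M)$ to obtain some $K\leq H$ with $K\cap M=Z(M)$, you must next apply Gasch\"utz to $K$ over $Z(M)$, and that requires Sylow-level splitting witnesses \emph{inside $K$}. Your witnesses $J_p^{g}$ lie in $H$, not in $K$, and there is no mechanism offered for conjugating them into $K$. More seriously, even granting a complement $J'\leq H$, Gasch\"utz gives no control over \emph{which} complement appears, so the assertion that the Sylow $p$-subgroups of $J'$ are $G$-conjugate to those of $J$ ``by construction'' is unsupported; two complements of $M\cap S$ in a Sylow subgroup $S$ of $H$ need not be conjugate, and nothing in the iteration forces $J_p^{g}$ to land in $J'$. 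Note too that your outline never invokes supersolubility, yet a Gasch\"utz-type statement for nilpotent kernels is false in general, so some use of that hypothesis is unavoidable.

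The paper sidesteps both obstacles by inducting on $|G|$ rather than attempting a direct splitting of $H$. It first reduces (passing to $G/N_p$) to the case that $N$ is a $q$-group for a single prime $q$, then takes a minimal normal subgroup $A\leq N$ of $G$, which has prime order by supersolubility, and reduces modulo $A$. In the critical case $A\cap H=1$, induction in $G/A$ produces a conjugate $\overline K$ of $JA/A$ inside $HA/A$, and this conjugate is \emph{chosen} to contain $J_qA/A$; pulling back through the isomorphism $H\to HA/A$ yields a complement $K\leq H$ of $N$ in $G$ with $J_q\leq K$. Local conjugacy of $K$ and $J$ is then automatic: for primes $p\neq q$ the Sylow $p$-subgroups of either are already Sylow in $G$, while for $p=q$ one has $J_q\in\Syl_q(K)$ by design. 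Reducing to a single prime is precisely what makes the local-conjugacy verification trivial, and this is where supersolubility does its work.
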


\begin{proof} \quad 
	The hypotheses imply that $H$ supplements $N$ in $G$. We induct on the
	order of $G$. If $N$ is trivial or if $H$ is a $p$-group, the conclusion
	follows immediately. If multiple primes divide $\abs{N}$, then for some
	prime $p$, $HN_p$ must be a strict subgroup of $G$ for $N_p \in \Syl_p(N)$;
	otherwise $H$ would contain a Sylow subgroup of $G$ for each prime and we
	would have $H=G$.  Let $p$ be such a prime. Induction in $G/N_p$
	implies $J^g \leq HN_p$ for some $g\in G$. Switching to a conjugate of $H$
	if necessary, we may assume that $g$ is trivial and apply the inductive
	hypothesis in $HN_p$ to conclude $J^{g'} \leq H$ for some $g'\in G$. We now
	proceed under the assumption that $N$ is a $q$-subgroup for some prime $q$. 
	
	Let $A \leq N$ be a minimal normal subgroup of $G$; as $G$ is supersoluble,
	it will have prime order $q$. If $A \leq H$, then in $G/A$, induction
	implies that $J^gA \leq HA = H$ for some $g\in G$ so that $J^g \leq H$.
	
	Otherwise, $A \cap H$ is trivial. Without loss, $J_q \leq H$ for some $J_q
	\in \Syl_q(J)$. In $G/A$, induction implies that a conjugate of $JA/A$ is
	contained in $HA/A$. Let $\overline{K}$ denote this conjugate. Switching to
	a different conjugate if necessary, we may assume that $J_qA/A \leq
	\overline{K}$. Let $\phi: h \mapsto hA/A$ denote the isomorphism from $H$
	to $HA/A$ and consider $K = \phi^{-1}(\overline K)$. It follows that $J_q
	\leq K$ and $\abs{K} = \abs{J}$ so that $K\leq H$ complements $N$ in $G$.
	As $N$ is a $q$-group, a Sylow $p$-subgroup of $J$ will be conjugate to a
	Sylow $p$-subgroup of $K$ for primes $p\ne q$. Lemma~\ref{lem:nil} then
	implies that $J$ and $K\leq H$ are conjugate in $G$.
\end{proof}

We now prove Theorem~\ref{thm:nil}.

\begin{proof}[Proof of Theorem~\ref{thm:nil}] \quad 
	Given $J$, $N$, and $\Omega$ as described in the hypotheses of the theorem,
	let $G= N \semi J$ denote the induced semidirect product and consider
	$G_\alpha$ for some $\alpha\in\Omega$. As $N$ acts transitively,
	$G=NG_\alpha$. For each prime $p$, the hypotheses of the theorem imply
	$(J_p)^{n_p} \leq G_\alpha$ for some $J_p \in\Syl_p(J)$ and $n_p\in N$, so
	that Proposition~\ref{prop:nil_split} implies $G_\alpha$ contains a
	conjugate of $J$, say $J^g$ for $g\in G$. It follows that $J$ fixes
	$\omega=g \cdot a$.
\end{proof}

This in turn implies:
\begin{corollary}
	\label{cor:nil}
	Let $G$ be a supersoluble split extension over a nilpotent subgroup $N$. If
	for each prime $p$ there is a Sylow $p$-subgroup $S$ of $G$ such that any
	two complements of $S\cap N$ in $S$ are conjugate, then any two complements
	of $N$ in $G$ are conjugate.
\end{corollary}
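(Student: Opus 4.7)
The plan is to imitate the proof of Corollary~\ref{cor:ab} and reduce to Theorem~\ref{thm:nil}. Given two complements $J$ and $J'$ of $N$ in $G$, I would let $G$ act on $\Omega = G/J'$ by left multiplication. Transitivity of $N$ on $\Omega$ is immediate from $G = NJ'$, and $N \semi J \iso G$ is supersoluble with $N$ nilpotent, so every hypothesis of Theorem~\ref{thm:nil} is in place once I verify that for each prime $p$ some Sylow $p$-subgroup of $J$ fixes a point in $\Omega$.

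Establishing that per-prime condition is the main content of the argument. Fix a prime $p$. Since $N$ is nilpotent, its Sylow $p$-subgroup $N_p$ is characteristic in $N$ and hence normal in $G$; consequently $N_p = S \cap N$ for every $S \in \Syl_p(G)$, and for any $J_p \in \Syl_p(J)$ and $J'_p \in \Syl_p(J')$ the products $N_p J_p$ and $N_p J'_p$ both lie in $\Syl_p(G)$. Let $S$ be the Sylow $p$-subgroup supplied by the hypothesis; conjugating $N_p J_p$ and $N_p J'_p$ to $S$ sends $J_p$ and $J'_p$ to two complements of $N_p$ in $S$, which the hypothesis asserts are conjugate in $S$. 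Unwinding these conjugations produces some $c \in G$ with $J_p^c \leq J'$, so $J_p$ fixes the coset $cJ' \in \Omega$.

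With the per-prime condition secured, Theorem~\ref{thm:nil} yields $g \in G$ with $J^g \leq J'$, and comparing orders forces $J^g = J'$. As $J$ and $J'$ were arbitrary complements, the corollary follows. I expect the only delicate point to be the per-prime step; once $N_p$ has been recognised as normal in $G$, however, the hypothesis on complements of $N_p$ in $S$ translates in a routine way into $G$-conjugacy of a Sylow $p$-subgroup of $J$ with one of $J'$, and the remaining bookkeeping is straightforward.
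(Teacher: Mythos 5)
Your proposal is correct and follows the same route as the paper: act on the cosets $\Omega = G/J'$ and invoke Theorem~\ref{thm:nil}, with your per-prime verification (via normality of $N_p$ in $G$ and Sylow conjugation into the given $S$) simply spelling out the step the paper leaves implicit. Note only that the hypothesis need not place the conjugating element inside $S$ (the point of these corollaries is to weaken Higman's ``conjugate within $S$''), but your argument works verbatim with a conjugator anywhere in $G$.
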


\begin{proof} \quad 
	Suppose arbitrary $J$ and $J'$ complement $N$ in $G$. Then $G$ acts on the
	cosets $\Omega = G/J'$ in such a way that we may apply
	Theorem~\ref{thm:nil} to infer that $J$ fixes $gJ'$ for some $g\in G$.
	Consequently, $J$ and $J'$ are conjugate, and we may conclude.
\end{proof}

\section{Concluding remarks}
\label{s:conc}

In their paper, Losey and Stonehewer exhibited a soluble group $G\iso N \semi
	J$ with $N$ nilpotent and $J$ supersoluble and a second complement $J'$ to $N$
in $G$ such that $J$ and $J'$ are locally conjugate but not
conjugate~\cite{Los79}. Thus, Lemma~\ref{lem:nil} cannot be extended to
supersoluble complements of a normal nilpotent subgroup in a soluble group.

\section*{Acknowledgments}
The author thanks Elizabeth Crites, the editor Alex Bartel, and an anonymous
reviewer for thoughtful and detailed feedback on the manuscript.


\begin{thebibliography}{8}

	\bibitem[Brown(1982)]{Bro82}
	K.\,S. Brown.
	\newblock \emph{Cohomology of Groups}
	\newblock (Springer, New York, 1982).

	\bibitem[Burkhart(2022)]{Bur22}
	M.\,C. Burkhart.
	\newblock Conjugacy conditions for supersoluble complements of an abelian
	base and a fixed point result for non-coprime actions.
	\newblock \emph{Proc. Edinb. Math. Soc. (2)} {\bf 65} (2022), 1075--1079.

	\bibitem[Evans and Shin(1988)]{Eva88}
	M.\,J. Evans and H.~Shin.
	\newblock Local conjugacy in finite groups.
	\newblock \emph{Arch. Math.} {\bf 50} (1988), 289--291.

	\bibitem[Glauberman(1964)]{Gla64}
	G.~Glauberman.
	\newblock Fixed points in groups with operator groups.
	\newblock \emph{Math. Zeitschr.} {\bf 84} (1964), 120--125.

	\bibitem[Higman(1954)]{Hig54}
	D.\,G. Higman.
	\newblock Remarks on splitting extensions.
	\newblock \emph{Pacific J. Math.} {\bf 4} (1954), 545--555.

	\bibitem[Isaacs(2008)]{Isa08}
	I.\,M. Isaacs.
	\newblock \emph{Finite Group Theory}
	\newblock (Amer. Math. Soc., Providence, 2008).

	\bibitem[Losey and Stonehewer(1979)]{Los79}
	G.\,O. Losey and S.\,E. Stonehewer.
	\newblock Local conjugacy in finite soluble groups.
	\newblock \emph{Quart. J. Math. Oxford (2)} {\bf 30} (1979), 183--190.

	\bibitem[Serre(2002)]{Ser02}
	J.-P. Serre.
	\newblock \emph{Galois Cohomology}
	\newblock (Springer, Berlin, 2002).

\end{thebibliography}

\end{document}